\documentclass{amsart}
\usepackage{adjustbox,lipsum}
\usepackage{mathtools}
\usepackage{amsmath}
\usepackage{amssymb}
\usepackage[shortlabels]{enumitem}
\usepackage{colonequals}
\usepackage{tabu}
\usepackage{tikz-cd}
\usepackage{mathrsfs} % for \mathscr (script letters)
\usepackage{makecell}

\usepackage[
        colorlinks, citecolor=darkgreen,
        backref,
        pdfauthor={Samir Siksek},
]{hyperref}

\usepackage{comment}
\newcommand{\PP}{\mathbb{P}}
\newcommand{\Q}{\mathbb{Q}}

\newcommand{\Z}{\mathbb{Z}}
\newcommand{\cX}{\mathscr{X}}
\newcommand{\cL}{\mathcal{L}}

\newcommand{\cF}{\mathcal{F}}

\newcommand{\OO}{\mathcal{O}}

\DeclareMathOperator{\Gal}{Gal}

\renewcommand{\setminus}{-}

\newtheorem{thm}{Theorem}

\newtheorem{lem}[thm]{Lemma}

\newtheorem{conj}[thm]{Conjecture}

\theoremstyle{definition}

\theoremstyle{remark}

\definecolor{darkgreen}{rgb}{0,0.5,0}

\begin{document}

\title[]{
On the unit equation $\varepsilon+\delta=n$ in cubic fields
}

\begin{abstract}
Let $n$ be an integer not equal to $-2$, $0$ or $2$.
We consider the equation $\varepsilon+\delta=n$ in units $\varepsilon$, $\delta$ of cubic fields.
We show that this unit equation has no solutions for $100\%$ of cubic fields, when ordered by discriminant.
This is consistent with a recent conjecture of the authors.
\end{abstract}

\author{Maleeha Khawaja}

\address{Mathematics Institute\\
    University of Warwick\\
    CV4 7AL \\
    United Kingdom}

\email{maleeha.khawaja@warwick.ac.uk}

\author{Samir Siksek}

\address{Mathematics Institute\\
    University of Warwick\\
    CV4 7AL \\
    United Kingdom}

\email{s.siksek@warwick.ac.uk}

\date{\today}

\keywords{hyperbolic curve, Diophantine stability, arithmetic statistics}
\subjclass[2020]{11D61}

\maketitle

\section{Introduction}

Let $K$ be a number field, and write $\OO_K$ for its ring of integers.
Let $C$ be a smooth projective and absolutely irreducible curve of genus $g$, defined over
$K$.
Let $D$ be a reduced effective divisor
on $C$ and consider the punctured curve $X=C \setminus D$. The
Euler characteristic of $X$ is given by $\chi(X)=2-2g-\deg(D)$. We say
that $X$ is hyperbolic if its Euler characteristic is negative. 
Let $\cX/\OO_K$ be
a model for $X$ over $\OO_K$.
The famous Faltings--Siegel Theorem (Remark D.9.2.2 of \cite{MR1745599})
asserts that $\cX(\OO_K)$ is finite. 
Now let $L/K$ be a finite extension.
We define the set of \textbf{$L$-new points} by
\[
\cX(\OO_L)_\mathrm{new}=\{ P \in \cX(\OO_L) \; : \; K(P)=L\}.
\]
We note that the $L$-new points are ones defined over $L$,
but not over any strictly smaller extension of $K$.
In \cite{KS_stab} we state the following conjecture, which we refer to as the
\textbf{statistical Diophantine stability conjecture for hyperbolic curves}.
\begin{conj}\label{conj:diostab}
Let $K$ be a number field and $\cX/\OO_K$ a model
for a hyperbolic curve. Let $\cL$ be an allowable family of extensions $L/K$.
Then $\cX(\OO_L)_{\mathrm{new}}=\emptyset$ for 100\% of
$L \in \cL$, when ordered by norms of discriminants.
\end{conj}
For the definition of an allowable family of extensions we refer to \cite{KS_stab},
but for the purpose of this paper we note that it includes the family
of all extensions $L/K$ of a given fixed degree $m \ge 2$. The following result 
is Theorem 7 of \cite{KS_stab}.
\begin{thm}\label{thm:original}
	$(\PP^1 \setminus \{0,1,\infty\})(\OO_L)=\emptyset$ for $100\%$
	of cubic fields $L$, when ordered by absolute discriminant.
\end{thm}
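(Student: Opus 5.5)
The statement concerns solutions of the unit equation $\varepsilon+\delta=1$ with $\varepsilon,\delta\in\OO_L^\times$. Since $\PP^1\setminus\{0,1,\infty\}$ has the model whose $\OO_L$-points are the $\varepsilon\in\OO_L^\times$ with $1-\varepsilon\in\OO_L^\times$, a point is such a unit $\varepsilon$. Because $[L:\Q]=3$ is prime, either $\varepsilon\in\Q$ or $\Q(\varepsilon)=L$. Rational units are $\pm1$, and $1-\varepsilon$ is then $0$ or $2$, neither a unit, so only $L$-new points need ruling out. The plan is to show that a cubic field containing an exceptional unit lies in a family of density zero among cubic fields ordered by $|\Delta_L|$. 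By Davenport--Heilbronn, the number of cubic fields with $|\Delta_L|\le X$ grows like $cX$, so it suffices to show that the number of such fields containing an exceptional unit is $o(X)$.

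First, if $\varepsilon$ is exceptional, then $\varepsilon$ is a root of a monic cubic $f(x)=x^3-ax^2+bx-c$ with $c=N(\varepsilon)=\pm1$. Because $1-\varepsilon$ is also a unit, $f(1)=\pm1$, so $1-a+b-c=\pm1$. This leaves four one-parameter families in $a\in\Z$, for example $f_a(x)=x^3-ax^2+(a-1)x+1$ and its sign variants. These are essentially the classical ``simplest''-type families.

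Second, I will count the fields these families produce. Since $\Z[\varepsilon]\subseteq\OO_L$, we have $\Delta_L\mid\mathrm{disc}(f_a)$, and $\mathrm{disc}(f_a)$ is a quartic polynomial in $a$. The key step is a lower bound on $|\Delta_L|$ in terms of $a$. I would argue via regulators. The unit $\varepsilon$ (when $|a|$ is large) has a conjugate of size about $|a|$, so $\log|a|\ll R_L$ up to torsion index issues. Combined with $R_L\ll |\Delta_L|^{1/2}(\log|\Delta_L|)^2$, this is too weak. Instead I would use the fact that each cubic field contains only boundedly many exceptional units; Evertse's bound on the number of solutions of unit equations is uniform. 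So distinct values of $a$ up to this bounded multiplicity give distinct fields. If $|\Delta_L|\le X$ forced $|a|\ll X^{\theta}$ with $\theta<1$, we would obtain $O(X^\theta)=o(X)$ fields.

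For that bound I plan a more elementary route. Write $\mathrm{disc}(f_a)=m^2\Delta_L$, where $m$ is the index $[\OO_L:\Z[\varepsilon]]$. The number of $a$ with $|a|\le Y$ for which the index $m$ is large is controlled by counting $a$ with $m^2\mid\mathrm{disc}(f_a)$, as in squarefree-value sieves for polynomials. A simpler alternative suffices for density zero. The fields with $|\Delta_L|\le X$ coming from the family are either those with $|a|\le X^{1/3}$, or those with large index, meaning $m^2\ge a^4/X$. I would bound the number of $a\le Y$ with a square divisor $m^2\ge Y^{4}/X$ of the quartic $\mathrm{disc}(f_a)$. Divisors $m$ divisible only by primes below a threshold can be treated by the Chinese remainder theorem. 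Large prime-square divisors of a quartic are the hard case.

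The main obstacle is exactly this step: controlling large square divisors of the quartic discriminant polynomial. A cleaner fallback avoids sieving altogether. Count all cubic fields containing an exceptional unit by noting that each such field is generated by a root of some $f_a$, and that $|\Delta_L|\gg$ a positive power of $|a|$. This should follow because $\varepsilon$ has a conjugate of size about $|a|$, so the height of $\varepsilon$ is about $\log|a|$, while a generator of $\OO_L$ of small height exists with height $\ll\log|\Delta_L|$. That only gives logarithmic comparisons, however, and I expect some genuine sieve input (Hooley or Greaves type) to be needed to bound large square divisors and conclude $O(X^{\theta})$ for some $\theta<1$, hence density $0$ relative to Davenport--Heilbronn.
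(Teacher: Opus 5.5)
Your reduction is correct. Rational $\varepsilon$ are excluded, $\varepsilon$ is a root of a monic cubic with constant term $\pm1$ and $f(1)=N(1-\varepsilon)=\pm1$, and Evertse's uniform bound makes $a\mapsto L$ boundedly-many-to-one. Up to the symmetries $\varepsilon\leftrightarrow 1-\varepsilon$ and $\varepsilon\mapsto-(1-\varepsilon)/\varepsilon$, which do not change $L$, only two families matter:
\begin{itemize}
\item The family with $N(\varepsilon)=N(1-\varepsilon)=-1$ is Shanks' ``simplest cubics'', with square discriminant $(a^2-3a+9)^2$. It yields only cyclic fields, of which there are $O(X^{1/2})$.
\item The other is your $f_a(x)=x^3-ax^2+(a-1)x+1$, with $\mathrm{disc}(f_a)=P(a)=a^4-2a^3-5a^2+6a-23$.
\end{itemize}

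There is a genuine gap: the proposal stops at the only non-formal step, and the route you sketch for it cannot be completed. Write $\mathrm{disc}(f_a)=m_a^2\Delta_{L_a}$ with $m_a=[\OO_{L_a}:\Z[\varepsilon]]$. This index is unbounded along the family: for each prime $p$ dividing a value of $P$ one can choose $a$ modulo $p^2$ with $p\mid m_a$. So fields with $|\Delta_L|\le X$ can a priori come from arbitrarily large $a$. Your claim that $|\Delta_L|\le X$ forces $|a|\ll X^{\theta}$ with $\theta<1$ amounts to a polynomial bound for integral points on every twist $Dy^2=P(a)$. That follows from the $abc$ conjecture, but it is far beyond what Baker's method gives. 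Without it, a Hooley/Greaves-type count of $a$ in a box $|a|\le Y$ with a large square divisor of $P(a)$ does not help. Such bounds grow with $Y$, whereas you need the contributions of all ranges $|a|\ge X^{\theta}$ together to be $o(X)$. That is a uniform $o(X)$ bound on large integral points on the twists $Dy^2=P(a)$, $|D|\le X$, which no sieve provides. The height/regulator comparison, as you note, gives only logarithmic information.

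The missing idea is to use \emph{local} rather than archimedean information; this is the mechanism behind Theorem~\ref{thm:sparsecubics}, which the paper applies to these families. If $p\nmid P(a)$ then $p\nmid\Delta_{L_a}$. Hence if $P$ has no root in $\mathbb{F}_p$, \emph{no} field of the family is ramified at $p$, however large $a$ is. Since $P$ is irreducible, $\Gal(P)$ is transitive on its four roots and so contains a fixed-point-free element. By Chebotarev, the set $S$ of such primes therefore has positive density. By Davenport--Heilbronn with finitely many local conditions (the paper uses Bhargava--Taniguchi--Thorne), the proportion of cubic fields unramified at every prime of a finite $S_0\subset S$ is $\prod_{p\in S_0}(1+p^{-1}+p^{-2})^{-1}$. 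This tends to $0$ as $S_0$ grows, because $\sum_{p\in S}1/p$ diverges. This gives density zero without ever relating $a$ to $\Delta_L$, and a posteriori yields the $o(X)$ count you were trying to prove directly.
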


In this paper we give further evidence of the statistical Diophantine
stability conjecture by proving the following generalization.

\begin{thm}\label{thm:main}
Let $n$ be a non-zero integer. 
	\begin{enumerate}[(I)]
		\item There are infinitely many cubic fields $L$ such that
			$(\PP^1 \setminus \{0,n,\infty\})(\OO_L)_{\mathrm{new}} \ne \emptyset$.
		\item $(\PP^1 \setminus \{0,n,\infty\})(\OO_L)_{\mathrm{new}}=\emptyset$ for $100\%$
		of cubic fields $L$, when ordered by absolute discriminant.
	\end{enumerate}
\end{thm}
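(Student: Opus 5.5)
Points of $(\PP^1 \setminus \{0,n,\infty\})(\OO_L)$ are the elements $\varepsilon \in \OO_L$ such that both $\varepsilon$ and $n-\varepsilon$ are units; we write $\delta = n-\varepsilon$, so that $\varepsilon+\delta=n$. Such a point is $L$-new exactly when $\varepsilon \notin \Q$, that is, when $\varepsilon$ generates the cubic field $L$.

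\textbf{Part (I).} We exhibit a one-parameter family of cubic units $\varepsilon$ for which $n-\varepsilon$ is also a unit. Let $\varepsilon$ be a root of
\[
f_t(x)=x(x-n)(x-t)\pm 1, \qquad t \in \Z .
\]
Then $\varepsilon \cdot (\varepsilon-n)\cdot(\varepsilon-t)=\mp 1$, so $\varepsilon$ and $\varepsilon-n$ both divide a unit and are therefore units. Hence $\delta=n-\varepsilon$ is a unit. It remains to check two things.
\begin{enumerate}[(a)]
\item $f_t$ is irreducible over $\Q$ for all but finitely many $t$. A rational root would be an integer $r$ with $r(r-n)(r-t)=\mp1$, which forces $r=\pm1$ and $r-n=\pm1$. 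This is impossible unless $n \in \{\pm 2\}$, and in that case only finitely many $t$ are excluded.
\item The fields $\Q(\varepsilon_t)$ are infinitely many. The discriminant of $f_t$ is a nonconstant polynomial in $t$. Also, a fixed cubic field has only finitely many units $\varepsilon$ with $n-\varepsilon$ a unit, by Siegel's theorem. So each field arises for only finitely many $t$.
\end{enumerate}

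\textbf{Part (II).} Let $\varepsilon$ be an $L$-new solution and $g(x)=x^3-ax^2+bx\mp1$ its minimal polynomial, with $a,b\in\Z$. The norm of $n-\varepsilon$ equals $g(n)=\pm1$. This gives a linear relation
\[
n^3-an^2+bn \mp 1 = \pm 1,
\]
so $b$ is determined by $a$ up to finitely many choices. Concretely, $bn = an^2 - n^3 + c$ with $c\in\{0,\pm2\}$. Consequently every such $L$ is generated by a root of a cubic from finitely many one-parameter families $g_{a}(x)$ with $a\in\Z$. The discriminant $\Delta(g_a)$ is a polynomial in $a$ of degree $4$. Moreover $\mathrm{Disc}(L)$ divides $\Delta(g_a)$ up to a square factor, so $|\mathrm{Disc}(L)|\le |\Delta(g_a)| \ll a^4$. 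Hence the number of fields $L$ with $|\mathrm{Disc}(L)|\le X$ coming from these families is $O(X^{1/4})$, in fact $O(X^{1/2})$ even under crude bounds. By Davenport--Heilbronn there are $\asymp X$ cubic fields of discriminant at most $X$, so these fields form a density-zero set.

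\textbf{Main obstacle.} The step I expect to be hardest is the lower bound $|a|\ll |\Delta(g_a)|^{1/4}$, or more precisely the fact that the field discriminant may be much smaller than the polynomial discriminant $\Delta(g_a)$. I would handle this as follows. For each field $L$, Siegel's theorem gives only finitely many admissible $a$, and I would bound this number uniformly: by Evertse's bound, the unit equation has at most $O(1)$ solutions in a cubic field. Then I need the count of $a$ with $|\mathrm{Disc}(\Q(\varepsilon_a))|\le X$ to be $o(X)$. I would try to bound it by noting that $\varepsilon$ lies in $\OO_L$ with archimedean size $\gg |a|$. Since $\varepsilon$ and $\delta$ are units, the ring $\Z[\varepsilon]$ has index controlled through the relation with $\delta$. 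An alternative is a squarefree-sieve argument showing that $\Delta(g_a)$ is, up to a bounded factor, squarefree for a positive proportion of $a$, which would recover $|\mathrm{Disc}(L)|\gg a^4$ often enough. Failing that, I would use the crude bound that $\mathrm{Disc}(L)\ge \Delta(g_a)/m^2$, where $m$ is the index, together with a count of the $a$ giving large index.
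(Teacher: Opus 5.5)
Your Part (I) is correct and matches the paper: since $X^3+aX^2-(n^2+an)X+1=X(X-n)(X+a+n)+1$, your family $x(x-n)(x-t)\pm1$ is exactly the family of Lemma~\ref{lem:minpol}. Irreducibility plus Siegel finiteness in each fixed field then gives infinitely many fields; the remark that $\Delta(f_t)$ is nonconstant is unnecessary.

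Part (II), however, has a genuine gap. From $\Delta(g_a)=[\OO_L:\Z[\varepsilon]]^2\,\mathrm{Disc}(L)$ you get $|\mathrm{Disc}(L)|\le|\Delta(g_a)|$, which is the wrong direction. To deduce that $|\mathrm{Disc}(L)|\le X$ leaves only $o(X)$ admissible $a$, you need a \emph{lower} bound on $|\mathrm{Disc}(L_a)|$ in terms of $a$, and this fails precisely when the index is large. Your suggested remedies do not supply such a bound:
\begin{enumerate}[(i)]
\item Evertse's bound only says each field arises from $O(1)$ values of $a$. Combined with Davenport--Heilbronn, this gives the trivial bound $O(X)$ for $\#\{a:|\mathrm{Disc}(L_a)|\le X\}$, not $o(X)$, and these $a$ have no a priori size bound.
\item The archimedean size of $\varepsilon$ forces nothing by itself. $\mathrm{Disc}(L)$ is governed by the successive minima of $\OO_L$, not of $\Z[\varepsilon]$, and these differ exactly when the index is large.
\item A squarefree sieve giving a positive proportion in a box $|a|\le Y$ says nothing about the exceptional $a$, nor about $a$ far outside the box. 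Controlling large square divisors of values of a quartic polynomial is itself open in general.
\end{enumerate}
So the density-zero claim is unproved.

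The missing idea is local, not archimedean. The paper's argument runs as follows.
\begin{enumerate}[(i)]
\item Take $g=\Delta_X(F)$, a squarefree quartic in $T$.
\item Show $g$ has no rational root for any $n\ge3$. A root would give a rational point with $z=1$ on the sextic $H(x,y,z)=0$. Its normalization is the elliptic curve $Y^2+Y=X^3$ with $E(\Q)\cong\Z/3\Z$, and all three of its points lie over points with $z=0$.
\item Conclude that $\Gal(g)\subseteq S_4$ has no fixed point, hence contains a fixed-point-free element.
\item By Chebotarev, $g$ then has no root modulo $p$ for a positive density of primes $p$. This constrains the local behaviour at $p$ of \emph{every} field $\Q(\varepsilon_t)$ with $t\in\Q$. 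Roughly, $v_p$ of the discriminant is forced to be even, with the evenness of $\deg g$ handling $t\notin\Z_p$.
\item Counting cubic fields with prescribed local conditions (Bhargava--Taniguchi--Thorne, packaged as Theorem~\ref{thm:sparsecubics}) bounds the upper density of the family by a product $\prod_p(1-c/p)$ over these primes, which is $0$.
\end{enumerate}
This route never needs any control of the index $[\OO_L:\Z[\varepsilon]]$, and it works even for rational parameters.
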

By contrast to (I), for any non-zero integer $n$, there are finitely many quadratic
fields $L$ for which $(\PP^1 \setminus \{0,n,\infty\})(\OO_L)_{\mathrm{new}} \ne \emptyset$;
this is shown in Section~\ref{sec:quadratic}. Thus the quadratic analogue of (II)
trivially holds.

Let $L/\Q$ be a number field. Let $\varepsilon \in \PP^1(L)$.
We note that $\varepsilon \in (\PP^1 \setminus \{0,n,\infty\})(\OO_L)$
if and only if it does not reduce to any of $0$, $n$, $\infty$
modulo any prime ideal of $\OO_L$. This is equivalent to
$\varepsilon$ and $n-\varepsilon$ being units of $\OO_L$.
Thus,
\[
	(\PP^1 \setminus \{0,n,\infty\})(\OO_L) \; = \; \{\varepsilon \in \OO_L^\times \; : \; (n-\varepsilon) \in \OO_L^\times\}.
\]
We see that there is a $1$-$1$ correspondence between $\OO_L$-points of $\PP^1 \setminus \{0,n,\infty\}$
and solutions to the unit equation
\begin{equation}\label{eqn:unit}
	\varepsilon+\delta \; = \; n, \qquad \varepsilon,~\delta \in \OO_L^\times.
\end{equation}
The finiteness of solutions to the unit equation is a famous theorem of Siegel \cite{Siegel},
now often viewed as a special case of the Faltings--Siegel theorem.

We note that $(-\varepsilon,-\delta)$ is a solution to the corresponding unit equation
with $n$ replaced by $-n$. Henceforth we suppose, without loss of generality, that $n$ is a positive integer.

\medskip

We briefly discuss previous works which study the unit equation~\eqref{eqn:unit}. 
There are numerous works which study~\eqref{eqn:unit} for $n=1$.
In a series of papers spanning 
forty years and culminating in \cite{Nagell2,Nagell}, Nagell provided a 
complete classification of solutions to \eqref{eqn:unit} over number fields of unit rank 
at most one. We refer the reader to the work of Evertse and Gyory~\cite{EvertseGyory}, 
which contains a comprehensive survey of effective methods and results surrounding~\eqref{eqn:unit}.

For general $n$, most papers focus on fixing a number field $L$ and determining 
the set of integers $\mathcal{N}_{L}$ for which there is a solution to~\eqref{eqn:unit} over $L$; 
see, for example, \cite{Jarden07}, \cite{Kostra}, \cite{Newman}. 
In particular, we point out recent work of Tinkov\'{a}, Visser and Yatsyna~\cite{TVY26} which 
provides an explicit upper bound for $\mathcal{N}_{L}$ for any number field $L$ 
not containing a real quadratic subfield. 
The aforementioned work also gives a parametrization of all solutions 
to~\eqref{eqn:unit} when $L$ is a complex cubic field or a cyclic cubic field.

\section{Quadratic Solutions to the Unit Equation \eqref{eqn:unit}}\label{sec:quadratic}

\begin{thm}
Let $n$ be a positive integer.
	The only solutions to \eqref{eqn:unit} in quadratic fields, up to Galois conjugation and swapping $\varepsilon$ and $\delta$,
	are
	\begin{itemize}
		\item for $n=1$,
		\[
			\left(\frac{1+\sqrt{-3}}{2}, \, \frac{1-\sqrt{-3}}{2} \right),
			\qquad
			\left(\frac{1+\sqrt{5}}{2}, \frac{1-\sqrt{5}}{2} \right),
			\qquad
			\left(\frac{-1+\sqrt{5}}{2}, \frac{3-\sqrt{5}}{2} \right);
		\]

		\item for $n=2$, 
		\[
			(1,1), \qquad (1+\sqrt{2},1-\sqrt{2}), \qquad
			\left(\frac{1+\sqrt{5}}{2}, \frac{3-\sqrt{5}}{2} \right);
		\]
		\item for $n \ge 3$,
		\[
			\left(\frac{n + \sqrt{n^2-4 }}{2}, 
			\frac{n - \sqrt{n^2-4 }}{2} \right), 
			\qquad
			\left(\frac{n + \sqrt{n^2+4 }}{2}, 
			\frac{n - \sqrt{n^2+4}}{2} \right).
		\]
	\end{itemize}
\end{thm}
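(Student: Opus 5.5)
Let $L$ be a quadratic field containing a solution, and let $\sigma$ denote the nontrivial automorphism of $L$. (If $\varepsilon\in\Q$, then $\varepsilon,\delta\in\{\pm1\}$ with $\varepsilon+\delta=n>0$, so $n=2$ and $(\varepsilon,\delta)=(1,1)$.) The key observation is that $\varepsilon$ and $\delta=n-\varepsilon$ are both units, so their norms are
\[
N(\varepsilon)=\varepsilon\varepsilon^\sigma=a\in\{\pm1\},\qquad
N(n-\varepsilon)=n^2-n\,\mathrm{Tr}(\varepsilon)+a=b\in\{\pm1\}.
\]
Writing $t=\mathrm{Tr}(\varepsilon)\in\Z$, the second equation gives $n t = n^2+a-b$, so $n \mid a-b\in\{0,\pm2\}$. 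This reduces the problem to a finite case analysis. Throughout, $\varepsilon$ is a root of $x^2-tx+a$, which must be irreducible.

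\textbf{Case $a=b$.} Then $t=n$ and $\varepsilon$ is a root of $x^2-nx\pm1$. The polynomial $x^2-nx+1$ has discriminant $n^2-4$. It is reducible exactly when $n^2-4$ is a square, i.e. $n=2$ (and $n=1$ gives $\sqrt{-3}$). The polynomial $x^2-nx-1$ has discriminant $n^2+4$, which is never a square for $n\ge1$. In each case $\delta=n-\varepsilon=\varepsilon^\sigma$, the Galois conjugate. This yields the two families listed for $n\ge3$. For $n=1$ it gives $(1\pm\sqrt{-3})/2$ and $(1\pm\sqrt5)/2$, and for $n=2$ it gives $1\pm\sqrt2$. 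Note that $x^2-2x+1$ is reducible, recovering only $(1,1)$.

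\textbf{Case $a\ne b$.} Then $n\mid 2$, so $n\in\{1,2\}$.
\begin{itemize}
\item For $n=1$: we have $t=1+a-b=\pm2$. This gives $x^2-2x-1$ (with $a=-1$) or $x^2+2x+1$ (with $a=1$, reducible). The first yields $\varepsilon=1\pm\sqrt2$, $\delta=\mp\sqrt2$, which is not a unit. So we must recheck: solving $n t=n^2+a-b$ with $n=1$ gives $t=1+a-b$. For $a=1,b=-1$ we get $t=3$, polynomial $x^2-3x+1$, so $\varepsilon=(3\pm\sqrt5)/2$ and $\delta=(-1\mp\sqrt5)/2$. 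For $a=-1,b=1$ we get $t=-1$, polynomial $x^2+x-1$, so $\varepsilon=(-1\pm\sqrt5)/2$ and $\delta=(3\mp\sqrt5)/2$. Up to swapping, both are the third listed solution.
\item For $n=2$: we have $t=(4+a-b)/2\in\{3,1\}$. With $a=1$ the polynomial is $x^2-3x+1$, giving $\varepsilon=(3\pm\sqrt5)/2$ and $\delta=(1\mp\sqrt5)/2$. With $a=-1$ the polynomial is $x^2-x-1$, giving the same pair swapped.
\end{itemize}

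\textbf{Verification and main obstacle.} The final step is to check that each listed pair is genuinely a solution and that no two coincide up to conjugation and swapping. Both norms are $\pm1$ by construction, so each $\delta$ is a unit. The main obstacle is purely bookkeeping: handling the sign cases, identifying reducible polynomials, and matching solutions up to the stated equivalences. There is no deep input beyond the norm computation.
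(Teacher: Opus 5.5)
Your proof is correct and follows essentially the paper's argument. Your identity $N(n-\varepsilon)=n^2-n\,\mathrm{Tr}(\varepsilon)+N(\varepsilon)$ is exactly the paper's comparison of the constant terms of $m_\varepsilon(n-X)$ and $m_\delta(X)$, and you then make the same split into $N(\varepsilon)=N(\delta)$ (forcing $t=n$) and $N(\varepsilon)\neq N(\delta)$ (forcing $n\in\{1,2\}$). The only blemish is the abandoned first attempt in the $n=1$, $a\neq b$ subcase, where you wrote $t=\pm2$ instead of $t=1\pm2$; your corrected values $t=3$ and $t=-1$ are right, so just delete the false start in a final write-up.
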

\begin{proof}
Let $(\varepsilon,\delta)$ be a solution to \eqref{eqn:unit} with $L$ quadratic.
We note that if $\varepsilon \in \Q$, then $\varepsilon$, $\delta = \pm 1$, and so $n=2$,
	and $(\varepsilon,\delta)=(1,1)$.

We may therefore suppose that $L=\Q(\varepsilon)$. Write $m_\varepsilon(X)$ and $m_\delta(X)$
for the minimal polynomials of $\varepsilon$ and $\delta$ respectively. 
	Then,
	\[
		m_\varepsilon(X)=X^2+aX+N(\varepsilon), \qquad m_\delta(X)=X^2+bX+N(\delta)
	\]
	where $a$, $b \in \Z$, and $N(\varepsilon)$, $N(\delta)$ are the norms of $\varepsilon$
	and $\delta$ respectively, which both must be $\pm 1$. However, $\delta$ is a root
	of 
	\[
		m_\varepsilon(n-X)=X^2-(2n+a)X+n^2+an+N(\varepsilon).
	\]
	This must be equal to $m_\delta(X)$ and therefore
	\begin{equation}\label{eqn:quad}
		n^2+an=N(\delta)-N(\varepsilon).
	\end{equation}
	Suppose $n \ge 3$ first. It follows that $N(\delta)=N(\varepsilon)$ and $a=-n$.
	Thus $\varepsilon$ is a root of $X^2-nX+\alpha$ where $\alpha=\pm 1$. Hence, up to
	Galois conjugation,
	\[
		\varepsilon=\frac{n + \sqrt{n^2-4 \alpha}}{2}, 
		\qquad
		\delta=\frac{n - \sqrt{n^2-4 \alpha}}{2}. 
	\]
	It is easy to check that $n^2-4$ and $n^2+4$ are never squares for $n \ge 3$, so these are genuinely quadratic
	solutions.

	Suppose now that $n=2$. In addition to the case $N(\delta)=N(\varepsilon)$ considered
	above, we must also consider $N(\delta)=-N(\varepsilon)$. By swapping $\varepsilon$
	and $\delta$ if necessary, we may suppose that $N(\delta)=1$ and $N(\varepsilon)=-1$. 
	From \eqref{eqn:quad}, we obtain $a=-1$, and thus $\varepsilon$ is a root of $X^2-X-1$.
	Hence, up to Galois conjugation, 
	\[
		\varepsilon=\frac{1+\sqrt{5}}{2}, \qquad \delta=\frac{3-\sqrt{5}}{2}.
	\]

	Finally suppose $n=1$. This case was known to Nagell~\cite[Section 2]{Nagell2} but we include 
	the details for completeness. Again, we must additionally consider the case $N(\delta)=-N(\varepsilon)$ 
	and we can assume that, without loss of generality, $N(\delta)=1$ and $N(\varepsilon)=-1$. 
	From \eqref{eqn:quad}, we obtain $a=1$, and thus $\varepsilon$ is a root of $X^2+X-1$. 
	Therefore, up to Galois conjugation,
	\[
		\varepsilon=\frac{-1+\sqrt{5}}{2}, \qquad \delta=\frac{3-\sqrt{5}}{2}.
	\]

\end{proof}

\section{Cubic Solutions to the Unit Equation \eqref{eqn:unit}}
In this section we give an explicit parametrization of solutions
of the unit equation \eqref{eqn:unit} that generate cubic fields.
\begin{lem}\label{lem:minpol}
Let $n \ge 3$. Let $(\varepsilon,\delta)$ be a solution
	to \eqref{eqn:unit} with $L=\Q(\varepsilon)$ being
	a cubic field. Then,
	after possibly swapping $\varepsilon$ and $\delta$,
	there is some $a \in \Z$
	such that the minimal polynomial
	of $\varepsilon$ is 
	\begin{equation}\label{eqn:minpol3}
		X^3+a X^2-(n^2+an)X+1.
	\end{equation}
	Conversely, let $a \in \Z$.  Then the polynomial
	\eqref{eqn:minpol3} is irreducible. Let
	$\varepsilon$ be
	a root of \eqref{eqn:minpol3},
	and write $\delta=n-\varepsilon$. Then $(\varepsilon,\delta)$
	is a solution to \eqref{eqn:unit} with $L=\Q(\varepsilon)$
	a cubic field.
\end{lem}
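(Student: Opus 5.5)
Mirroring the quadratic argument, write the minimal polynomials as
\[
m_\varepsilon(X)=X^3+aX^2+bX-N(\varepsilon), \qquad m_\delta(X)=X^3+cX^2+dX-N(\delta),
\]
with $a,b,c,d\in\Z$ and $N(\varepsilon),N(\delta)\in\{\pm1\}$. Since $\delta=n-\varepsilon$ also generates the cubic field $L$, the polynomial $-m_\varepsilon(n-X)$ is monic of degree $3$, has $\delta$ as a root, and so equals $m_\delta(X)$. Comparing constant terms gives
\[
m_\varepsilon(n)=n^3+an^2+bn-N(\varepsilon)=N(\delta),
\]
that is, $n(n^2+an+b)=N(\varepsilon)+N(\delta)\in\{-2,0,2\}$. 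For $n\ge3$ the left side is a multiple of $n$ lying in $\{-2,0,2\}$, so it is $0$. Hence $N(\delta)=-N(\varepsilon)$ and $b=-(n^2+an)$.

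Next normalise the sign of the norm. Exactly one of $N(\varepsilon)$, $N(\delta)$ equals $-1$. After swapping $\varepsilon$ and $\delta$ if necessary we may take $N(\varepsilon)=-1$, so the constant term of $m_\varepsilon$ is $+1$. This gives $m_\varepsilon(X)=X^3+aX^2-(n^2+an)X+1$, which is \eqref{eqn:minpol3}.

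For the converse, set $f(X)=X^3+aX^2-(n^2+an)X+1$. This is monic with integer coefficients, so by Gauss's lemma it is reducible over $\Q$ only if it has a rational, hence integer, root, and such a root divides $1$. Direct evaluation gives
\[
f(1)=2+a-n^2-an=2-(n-1)(n+a+1)+\dots
\]
In practice it is cleaner to compute $f(1)=(1-n)(1+n+a)+1$ and $f(-1)=(1+n)(a-n+1)+1$, after double-checking the algebra. A root at $\pm1$ would force $(n\mp1)$ to divide $1$, which is impossible for $n\ge3$. So $f$ is irreducible, and $L=\Q(\varepsilon)$ is cubic.

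Since $f$ is monic with constant term $1$, $\varepsilon$ is an algebraic integer of norm $-1$, hence a unit. Also $\delta=n-\varepsilon$ is a root of $-f(n-X)$, whose constant term is $-f(n)$. We have $f(n)=n^3+an^2-n^3-an^2+1=1$, so $N(\delta)=f(n)=1$ and $\delta$ is a unit too.

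The only point requiring care is the divisibility step $n\mid N(\varepsilon)+N(\delta)$, which is exactly where the hypothesis $n\ge3$ is used. The explicit evaluations at $\pm1$ are routine bookkeeping.
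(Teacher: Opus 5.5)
Your forward direction is the same as the paper's. You identify $m_\delta(X)$ with $-m_\varepsilon(n-X)$, compare constant terms to get $n(n^2+an+b)=N(\varepsilon)+N(\delta)$, use $n\ge 3$ to force this to vanish, and swap so that $N(\varepsilon)=-1$. Your unit check for $\delta$ via $-f(n-X)$ and $f(n)=1$ also matches the paper.

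The difference is in the irreducibility step. The paper substitutes $m=2n+a$ and rewrites $f(1)=0$ and $f(-1)=0$ as $(m+a+2)(m-a-2)=4$ and $(a-2+m)(a-2-m)=4$. A parity argument then forces $n\in\{0,\pm2\}$. You instead use $f(1)\equiv 1 \pmod{n-1}$ and $f(-1)\equiv 1 \pmod{n+1}$. This is shorter and more transparent: for any integer root $k$ you get $n-k \mid f(n)-f(k)=1$, and taking $k=\pm1$ gives the same exceptional set $n\in\{0,\pm2\}$.

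One correction: your identity for $f(-1)$ is wrong. In fact $(1+n)(a-n+1)+1=a+an-n^2+2$, whereas
\[
f(-1)=n^2+an+a=(n+1)(n+a-1)+1.
\]
With the correct factorisation the conclusion $n+1\mid 1$ is unchanged, so the argument survives. In a written proof, state these identities cleanly; drop the unfinished line ending in ``$+\dots$'' and the remark about double-checking the algebra.
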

\begin{proof}
	Note that the minimal polynomials of $\varepsilon$
	and $\delta$ have the form
	\begin{equation}\label{eqn:ms}
		m_\varepsilon(X)=X^3+aX^2+bX-N(\varepsilon), \qquad m_\delta(X)=X^3+cX^2+dX-N(\delta),
	\end{equation}
	where $a$, $b$, $c$, $d \in \Z$, and $N(\varepsilon)$, $N(\delta)$ are the norms of $\varepsilon$
	and $\delta$ respectively, which both must be $\pm 1$. 
	However, $\delta$ is a root $m_\varepsilon(n-X)$ whose leading
	coefficient is $-1$. We conclude that $m_\delta(X)=-m_\varepsilon(n-X)$.
	Comparing constant coefficients we obtain
	\[
		n(n^2+an+b)=N(\varepsilon)+N(\delta).
	\]
	As $n \ge 3$, we see that $N(\varepsilon)$ and $N(\delta)$
	are opposite signs, and after possibly swapping them
	we may suppose that $N(\varepsilon)=-1$ and $N(\delta)=1$.
	Thus $b=-n^2-an$, proving that \eqref{eqn:minpol3}
	is the minimal polynomial of $\varepsilon$.

	For the converse let $a \in \Z$. We first show that
	\eqref{eqn:minpol3} is irreducible. If it is not then
	$\pm 1$ is a root, whence
	\begin{equation}\label{eqn:cases}
		n^2+an-(2+a)=0, \qquad \text{or} \qquad n^2+an+a=0.
	\end{equation}
	Let $m=2n+a$. Then
	\[
		(m+a+2)(m-a-2)=4, \qquad \text{or} \qquad
		(a-2+m)(a-2-m)=4,
	\]
	according to whether we are in the first or second case of 
	\eqref{eqn:cases}. In either case, both factors
	have the same parity, and so both are equal to $2$
	or both are equal to $-2$.
	We quickly conclude that $n=0$, $2$ or $-2$, giving a contradiction.
	Finally, write $f$ for the polynomial in \eqref{eqn:minpol3}
	and let $\varepsilon$ be a root of $f$,
	and let $\delta=n-\varepsilon$. Then $\delta$ is a root 
	of $-f(n-X)$ whose constant coefficient is 
	$-f(n)=-1$, so $\delta$ is a unit.
\end{proof}

\begin{lem}\label{lem:minpol2}
	Let $n=2$. Let $(\varepsilon,\delta)$ be a solution
		to \eqref{eqn:unit} with $L=\Q(\varepsilon)$ being
		a cubic field. Then,
		after possibly swapping $\varepsilon$ and $\delta$,
		there is some $a \in \Z$
		such that the minimal polynomial
		of $\varepsilon$ is one of
		\begin{equation}\label{eqn:minpol2}
			\begin{split}
			X^3+aX^2-(2a+3)X-1; \\
			X^3+aX^2-(2a+4)X+1; \\
			X^3+aX^2-(2a+5)X+1.
			\end{split}
		\end{equation}
		Conversely, let $a \in \Z$, $a\neq -2$, $-3$, and let $\varepsilon$
		be a root of one of the polynomials in \eqref{eqn:minpol2} (which are irreducible),
		and write $\delta=2-\varepsilon$. Then $(\varepsilon,\delta)$
		is a solution to \eqref{eqn:unit} with $L=\Q(\varepsilon)$
		a cubic field.
	\end{lem}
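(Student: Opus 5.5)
I will follow the proof of Lemma~\ref{lem:minpol}, keeping track of the extra cases that arise when $n=2$.

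\textbf{Forward direction.} Write $m_\varepsilon$ and $m_\delta$ as in \eqref{eqn:ms}. As before, $m_\delta(X)=-m_\varepsilon(2-X)$. Comparing constant coefficients gives
\[
2(4+2a+b)=N(\varepsilon)+N(\delta).
\]
There are now two possibilities.
\begin{itemize}
\item If the norms have opposite signs, the right-hand side is $0$, so $b=-2a-4$. After swapping, $N(\varepsilon)=-1$, so the constant term of $m_\varepsilon$ is $-N(\varepsilon)=+1$. This gives the second polynomial in \eqref{eqn:minpol2}.
\item If $N(\varepsilon)=N(\delta)=\pm1$, then $4+2a+b=\pm1$, so $b=-2a-3$ or $b=-2a-5$.
  \begin{itemize}
  \item For $N=1$ we get $b=-2a-3$, and the constant term is $-1$: this is the first polynomial.
  \item For $N=-1$ we get $b=-2a-5$, and the constant term is $+1$: this is the third polynomial.
  \end{itemize}
\end{itemize}
No swap is needed in the equal-norm case. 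I would double-check the sign bookkeeping by computing $f(2)$ for each polynomial directly.

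\textbf{Converse: unit property.} For each $f$ in \eqref{eqn:minpol2}, a direct computation gives
\begin{align*}
f(2)&=8+4a-4a-6-1=1,\\
f(2)&=8+4a-4a-8+1=1,\\
f(2)&=8+4a-4a-10+1=-1.
\end{align*}
Now $-f(2-X)$ is monic with constant term $-f(2)=\mp1$, and $\delta=2-\varepsilon$ is a root of it. Hence $\delta$ is a unit. Also $\varepsilon$ is a unit, since $f$ has constant term $\pm1$.

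\textbf{Converse: irreducibility (main step).} The one piece of real work is showing that the cubics are irreducible exactly when $a\ne -2,-3$. A monic cubic with constant term $\pm1$ is reducible over $\Q$ if and only if it has a root $\pm1$, so it suffices to evaluate $f(1)$ and $f(-1)$ for each polynomial.
\begin{itemize}
\item First polynomial: $f(1)=-a-3$ and $f(-1)=3a+1$. So $f(1)=0$ exactly when $a=-3$, and $f(-1)$ never vanishes for integer $a$.
\item Second polynomial: $f(1)=-a-2$ and $f(-1)=3a+4$. So $f(1)=0$ exactly when $a=-2$, and $f(-1)$ never vanishes.
\item Third polynomial: $f(1)=-a-3$ and $f(-1)=3a+5$. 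So $f(1)=0$ exactly when $a=-3$, and $f(-1)$ never vanishes.
\end{itemize}
The excluded values are therefore precisely $a=-2$ and $a=-3$. For all other $a$, $f$ is irreducible, so $L=\Q(\varepsilon)$ is cubic and $(\varepsilon,\delta)$ is a cubic solution of \eqref{eqn:unit}.
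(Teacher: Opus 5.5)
Your proof is correct and follows the paper's own argument: the same case split on the signs of $N(\varepsilon)$ and $N(\delta)$ in $2(4+2a+b)=N(\varepsilon)+N(\delta)$ for the forward direction, and, as in Lemma~\ref{lem:minpol}, the checks $f(\pm1)\neq 0$ and $f(2)=\pm1$ for the converse. One small wording point: each cubic is reducible at only one of the excluded values ($a=-3$ for the first and third, $a=-2$ for the second), so ``irreducible exactly when $a\ne -2,-3$'' is true only of the three polynomials taken together, which is all the lemma needs.
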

\begin{comment}
	\begin{proof}
		Recall from the proof of Lemma~\ref{lem:minpol} the minimal
		polynomials of $\varepsilon$ and $\delta$ are of the form 
		\eqref{eqn:ms} with $a$, $b$, $c$, $d \in \Z$,
		and that moreover,
		\begin{equation}\label{eq:nequals2}
			2(4+2a+b)=N(\varepsilon)+N(\delta).
		\end{equation}
		There are three cases to consider.
		\begin{itemize}
			\item If $N(\varepsilon)=N(\delta)=1$ then $b=-2a-3$.
			\item If $N(\varepsilon)=-N(\delta)$ then, after possibly swapping $\varepsilon$ and $\delta$, 
			we can suppose $N(\varepsilon)=-1$ without loss of generality, and so $b=-2a-4$. 
			\item If $N(\varepsilon)=N(\delta)=-1$ then $b=-2a-5$.
		\end{itemize}

			Let $f$ be one of the polynomials given in~\eqref{eqn:minpol2}.
			The proof of the converse statement is similar to Lemma~\ref{lem:minpol}.
	\end{proof}
\end{comment}
\begin{lem}\label{lem:minpol1}
	Let $n=1$. Let $(\varepsilon,\delta)$ be a solution
		to \eqref{eqn:unit} with $L=\Q(\varepsilon)$ being
		a cubic field. Then,
		after possibly swapping $\varepsilon$ and $\delta$,
		there is some $a \in \Z$
		such that the minimal polynomial
		of $\varepsilon$ is one of
		\begin{equation}\label{eqn:minpol1}
			\begin{split}
			X^3+aX^2-(a+1)X+1; \\
			X^3+aX^2-(a+3)X+1; \\
			X^3+aX^2-(a-1)X-1.
			\end{split}
		\end{equation}
		Conversely, let $a \in \Z$, and let $\varepsilon$
		be a root of one of the polynomials in \eqref{eqn:minpol2} (which are irreducible),
		and write $\delta=1-\varepsilon$. Then $(\varepsilon,\delta)$
		is a solution to \eqref{eqn:unit} with $L=\Q(\varepsilon)$
		a cubic field.
	\end{lem}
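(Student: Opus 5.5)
The plan is to follow the proof of Lemma~\ref{lem:minpol} almost verbatim, with $n=1$. The only difference is that the constant-coefficient relation no longer forces the norms to have opposite signs, so there are three cases instead of one. Note also that the converse in the statement should refer to the polynomials in \eqref{eqn:minpol1}; that is what I will prove.

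\textbf{Forward direction.} Since $L=\Q(\varepsilon)$ is cubic and $\varepsilon$, $\delta$ are units, write
\[
m_\varepsilon(X)=X^3+aX^2+bX-N(\varepsilon), \qquad m_\delta(X)=X^3+cX^2+dX-N(\delta),
\]
as in \eqref{eqn:ms}. The polynomial $-m_\varepsilon(1-X)$ is monic of degree $3$ and has $\delta$ as a root. Since $\Q(\delta)=\Q(\varepsilon)$ is cubic, it equals $m_\delta(X)$. Comparing constant coefficients gives $-m_\varepsilon(1)=-N(\delta)$, that is,
\[
1+a+b=N(\varepsilon)+N(\delta).
\]
I then split according to the norms.
\begin{itemize}
\item If $N(\varepsilon)=-N(\delta)$, swap $\varepsilon$ and $\delta$ if necessary so that $N(\varepsilon)=-1$. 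Then $b=-(a+1)$ and the constant term is $+1$, which is the first polynomial in \eqref{eqn:minpol1}.
\item If $N(\varepsilon)=N(\delta)=-1$, then $b=-(a+3)$ and the constant term is $+1$, which is the second polynomial.
\item If $N(\varepsilon)=N(\delta)=1$, then $b=-(a-1)$ and the constant term is $-1$, which is the third polynomial.
\end{itemize}

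\textbf{Converse.} Let $f$ be one of the polynomials in \eqref{eqn:minpol1}. A monic cubic over $\Z$ with constant term $\pm1$ is reducible over $\Q$ only if it has a root $\pm 1$. Direct evaluation gives:
\begin{itemize}
\item for the first polynomial, $f(1)=1$ and $f(-1)=2a+1$;
\item for the second, $f(1)=-1$ and $f(-1)=2a+3$;
\item for the third, $f(1)=1$ and $f(-1)=2a-3$.
\end{itemize}
Each value at $-1$ is odd, hence nonzero, and each value at $1$ is $\pm1$. So all three polynomials are irreducible for every $a\in\Z$, with no exceptions in contrast to the case $n=2$. Thus if $\varepsilon$ is a root of $f$, then $\Q(\varepsilon)$ is a cubic field, and $\varepsilon$ is a unit because the constant term is $\pm1$. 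Finally, $\delta=1-\varepsilon$ is a root of the monic integral polynomial $-f(1-X)$, whose constant term is $-f(1)=\mp1$. Hence $\delta$ is also a unit, and $(\varepsilon,\delta)$ solves \eqref{eqn:unit} with $n=1$.

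\textbf{Expected difficulty.} None of the steps is a real obstacle. The only points needing care are the bookkeeping of signs in the three norm cases and the irreducibility check, which here reduces to a parity argument.
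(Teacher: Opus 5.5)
Your proof is correct and is essentially the argument the paper intends when it says the case $n=1$ is similar to Lemma~\ref{lem:minpol}: you derive the constant-coefficient identity $1+a+b=N(\varepsilon)+N(\delta)$, split into the three norm cases, and check irreducibility via the rational root test, where your parity observation correctly shows that no values of $a$ are exceptional. You are also right that the converse should refer to \eqref{eqn:minpol1} rather than \eqref{eqn:minpol2}.
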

The proofs for $n=2$ and $n=1$ are similar to the previous cases. We note that Lemma~\ref{lem:minpol1} was known to Nagell~\cite{Nagell2}. In Nagell's paper
only the first two cases of \eqref{eqn:minpol1} are given. This is because we can go from the 
third case to the first case by observing that if $(\varepsilon,\delta)$ is a solution to the
unit equation with $n=1$, then so is $(-\delta/\varepsilon,1/\varepsilon)$.
\begin{comment}
	\begin{proof}
		This case was known to Nagell~\cite{Nagell2} but we include the proof for completeness. 
		Recall from the proof of Lemma~\ref{lem:minpol} the minimal
		polynomials of $\varepsilon$ and $\delta$ are of the form 
		\eqref{eqn:ms} with $a$, $b$, $c$, $d \in \Z$,
		and that moreover,
		\begin{equation}
			1+a+b=N(\varepsilon)+N(\delta).
		\end{equation}
		There are three cases to consider.
		\begin{enumerate}[(a)]
			\item If $N(\varepsilon)=-N(\delta)$ then, after possibly swapping $\varepsilon$ and $\delta$, 
			we can suppose $N(\varepsilon)=-1$ without loss of generality, and so $b=-a-1$. 
			\item If $N(\varepsilon)=N(\delta)=-1$ then $b=-a-3$.
			\item Finally, suppose $N(\varepsilon)=N(\delta)=1$. 
				  By dividing through by $\varepsilon$, we can rewrite~\eqref{eqn:unit} as 
				  \[
				 	\frac{1}{\varepsilon} + \left(-\frac{\delta}{\varepsilon}\right) = 1. 
				  \]
				This takes us to case (a) as $N(1/\varepsilon)=1$ and $N(-\delta/\varepsilon)=-1$.
		\end{enumerate}
			Let $f$ be one of the polynomials given in~\eqref{eqn:minpol1}.
			The proof of the converse statement is similar to Lemma~\ref{lem:minpol}.
	\end{proof}
\end{comment}

\section{Proof of Theorem~\ref{thm:main}}
Part (I) of Theorem~\ref{thm:main} follows trivially from
Lemmas~\ref{lem:minpol},~\ref{lem:minpol2},~\ref{lem:minpol1}
and the fact that over each number field $L$, the unit equation \eqref{eqn:unit}
has finitely many solutions. 

\medskip

The rest of this section is devoted to proving part (II) of Theorem~\ref{thm:main}.
We now recall the following recent result of the authors \cite[Corollary 19]{KS_stab}. 
A key ingredient in the proof of this is a 
recent theorem due to Bhargava, Taniguchi and Thorne~\cite[Theorem 1.3]{BTT} 
which counts cubic fields having local specifications 
at a finite set of primes.

\begin{thm}\label{thm:sparsecubics}
	Let $F(T,X) \in \Q[T,X]$ be irreducible and have degree $3$ in $X$. 
		Let $\Theta$ be the set of $t \in \Q$ such that $F(t,X)$ 
		either has degree $<3$ or is reducible.
	Let $\Delta_X(F) \in \Q[T]$
	be the discriminant of $F$ with respect to $X$, and write this as
	\[
			\Delta_X(F) \; = \; g(T) \cdot h(T)^2
	\]
	where $g$, $h \in \Q[T]$ with $g$ squarefree.
	Suppose $g$ has even degree. Let $G=\Gal(g)$. Suppose that there is an element $\sigma \in \Gal(g)$
	acting freely on the roots of $g$. Let $\cF^\prime$ be the set of cubic fields
	we obtain from $F(t,X)=0$ with $t \in \Q \setminus \Theta$.
	Then $100\%$ of cubic fields, ordered by discriminant,
	do not belong to the family $\cF^\prime$.
\end{thm}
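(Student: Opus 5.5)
The plan is to locate a positive-density set of primes $p$ at which every member of $\cF'$ satisfies a local restriction, namely that $v_p(\Delta_L)$ is even. This excludes the local type "$p\OO_L=\mathfrak{p}_1^2\mathfrak{p}_2$", which forces $v_p(\Delta_L)=1$ for $p>3$. Then I would use \cite[Theorem 1.3]{BTT} to show that imposing the complementary conditions at many such primes leaves an arbitrarily small proportion of cubic fields.

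\textbf{Step 1 (discriminant parity).} Let $t\in\Q\setminus\Theta$ and let $L$ be the cubic field cut out by $F(t,X)$. The polynomial discriminant $\Delta_X(F)(t)$ equals $\Delta_L$ times a nonzero rational square. Hence $\Delta_L\equiv g(t)$ modulo $(\Q^\times)^2$, up to finitely many $t$ where $h(t)=0$; those give finitely many fields and are harmless.

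\textbf{Step 2 (parity at good primes).} By Chebotarev applied to the splitting field of $g$, the set $S$ of primes $p$ whose Frobenius is conjugate to $\sigma$ has positive Dirichlet density. Discard the finitely many $p$ dividing the leading coefficient or discriminant of $g$, or the denominators of its coefficients. Since $\sigma$ acts freely on the roots, for $p\in S$ the reduction $\bar g$ has no root in $\mathbb{F}_p$.

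Write $t=u/v$ with $\gcd(u,v)=1$, let $d=\deg g$, and set $G(u,v)=v^{d}g(u/v)$. Because $\bar g$ has no roots and its leading coefficient is a unit, $G$ has no zero on $\PP^1(\mathbb{F}_p)$, so $G(u,v)$ is a $p$-adic unit. As $d$ is even, $v_p(g(t))=v_p(G(u,v))-d\,v_p(v)$ is even. By Step 1, $v_p(\Delta_L)$ is even for every $L\in\cF'$ and every $p\in S$ with $p>3$.

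\textbf{Step 3 (counting).} For a prime $p>3$, let $c_p$ be the local density, in the sense of Bhargava--Taniguchi--Thorne, of cubic fields with $p\OO_L=\mathfrak{p}_1^2\mathfrak{p}_2$. The standard mass formula gives $c_p\asymp 1/p$. By \cite[Theorem 1.3]{BTT}, for any finite set $T\subset S$ the proportion of cubic fields, ordered by discriminant, that avoid this splitting type at every $p\in T$ is $\prod_{p\in T}(1-c_p)$. Every member of $\cF'$ lies in this set. Since $S$ has positive density, $\sum_{p\in S}1/p=\infty$, so the product tends to $0$ as $T$ grows. Therefore the upper density of $\cF'$ is $0$.

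\textbf{Main obstacle.} I expect the delicate point to be Step 3: confirming that the BTT result really supplies asymptotics with the independent local densities for an arbitrary finite set of local conditions, and that $c_p$ is bounded below by a constant times $1/p$. The parity argument in Step 2 is where the hypotheses (even degree of $g$, and $\sigma$ acting freely) are used, and it is short.
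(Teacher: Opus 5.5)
Your argument is correct and follows the route the paper indicates: the result is quoted from \cite[Corollary 19]{KS_stab}, whose key input is \cite[Theorem 1.3]{BTT} on cubic fields with prescribed local behaviour at finitely many primes, and you use it in Step 3 after your Chebotarev-plus-parity argument, which is exactly where the even degree of $g$ and the fixed-point-free $\sigma$ are needed. As a minor point, the values of $t$ with $h(t)=0$ never occur: $t\notin\Theta$ forces $F(t,X)$ to be an irreducible, hence separable, cubic, so $\Delta_X(F)(t)\neq 0$.
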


Let $L$ be a cubic field, and
suppose that the punctured curve $\PP^1\setminus \{0,n,\infty\}$
has a $L$-new point.
Then, there is a solution $(\varepsilon,\delta)$ to \eqref{eqn:unit}
with $L=\Q(\varepsilon)=\Q(\delta)$. 

We first suppose that $n \ge 3$. 
After possibly swapping
$\varepsilon$ and $\delta$, we may suppose that $\varepsilon$
is a root of \eqref{eqn:minpol3}. Let
\[
	F(T,X)\; = \; X^3+T X^2-(n^2+nT)X+1.
\]
It follows from Lemma~\ref{lem:minpol} that $L$ belongs to 
$\cF^\prime$. To prove Theorem~\ref{thm:main} it is enough
to check the criteria of Theorem~\ref{thm:sparsecubics}.
We note that 
\[
	\Delta_X(F) \; = \; 
	n^2 T^4 + (6 n^3 - 4) T^3 + (13 n^4 - 18 n) T^2 + (12 n^5 - 18 n^2) T + 4 n^6 - 27.
\]
The discriminant with respect to $T$ of $\Delta_X(F)$ is
\[ (n^2 - 3 n + 3)^3
    (n^2 + 3)^3
    (n^2 + 3 n + 3)^3
\]
which is clearly non-zero. Thus $\Delta_X(F) \in \Q[T]$ is square-free
for all $n$, and we may, in the notation of Theorem~\ref{thm:sparsecubics},
take $g=\Delta_X(F)$, $h=1$.

We claim that $g$ does not have a rational root for all values of $n$.
Let
\[
	H(x,y,z) \; = \; 4 x^6 + 12 x^5 y + 13 x^4 y^2 + 6 x^3 y^3 + x^2 y^4 - 18 x^2 y z^3 -
    18 x y^2 z^3 - 4 y^3 z^3 - 27 z^6.
\]
This is a homogeneous degree $6$ polynomial that satisfies
$H(n,T,1)=g(T)$.
Suppose there is some $n \ge 3$ and some $t \in \Q$
such that $g(t)=0$. Then $(x:y:z)=(n:t:1)$ is a rational point on the 
projective plane curve
\[
	D \; : \; 	H(x,y,z) \; = \; 0.
\]
A quick search using \texttt{Magma}~\cite{Magma} reveals three rational points
on this model
\[
(0 : 1 : 0), \qquad
    (-1 : 2 : 0), \qquad
     (-1 : 1 : 0).
\]
All of these three points are singular, but each corresponds to a unique
rational point on the normalization $\tilde{D}$ of $D$.
We find that $\tilde{D}$ has genus $1$, and thus is an elliptic curve.
A Weierstrass model for $\tilde{D}$ is given by
\[
	E \; : \; Y^2 +Y \; = \; X^3,
\]
with Cremona label \texttt{27a3}.
We find that $E/\Q$ has rank $0$ and that  in fact $E(\Q) \cong \Z/3\Z$.
Thus the three rational points we have found on $\tilde{D}$ 
are the only ones. In particular, for all $n \ge 3$, the
quartic polynomial $g$ does not have a rational root. 

To complete the proof, using Theorem~\ref{thm:sparsecubics},
it enough to show that some element of $G=\Gal(g)$ acts freely on the four
roots of $g$. We identify $G$ with a subgroup of $S_4$.
As $g$ does not have a rational root, $G$ does not have a fixed point
in $\{1,2,3,4\}$.
By an elementary enumeration of the subgroups of $S_4$,
we see that every subgroup without a fixed point contains
an element that acts freely on $\{1,2,3,4\}$. This completes the proof for $n \ge 3$.
\begin{comment}
Since $g$ does not have a rational root, the only possible
partition of $\{1,2,3,4\}$ into $G$-orbits is one of,
\[
	\{\{1,2\},\{3,4\}\}, \qquad \{\{1,2,3,4\}\}.
\]

If $g$ is irreducible then $G=\Gal(g)$ contains 
an element that acts freely on the roots of $g$ by a theorem of 
Jordan~\cite{Jordan}. 
Otherwise, if $g$ is reducible, $G$ is a non-transitive subgroup of $S_4$. 
There are $6$ non-transitive subgroups of $S_4$, we list these below 
along with their generators:
\begin{multline*}
\{e\},\quad
C_2\cong \langle (1 \; 2)\rangle, \quad
C_2\cong \langle (1 \; 2)(3 \; 4) \rangle, \\
C_3\cong \langle (1 \; 2 \; 3) \rangle, \quad
C_2\times C_2\cong \langle (1 \; 2), (3 \; 4)\rangle, \quad
S_3\cong \langle (1 \; 2 \; 3), (1 \; 2)\rangle.
\end{multline*}
However, since $g$ does not have a rational root, any specified 
root of $g$ can not be fixed by all members of $G$. 
This leaves only two possibilities for $G$, i.e., 
\[
G\cong C_2\cong \langle (1 \; 2)(3 \; 4) \rangle \qquad \text{ or }\qquad G\cong C_2\times C_2\cong \langle (1 \; 2), (3 \; 4)\rangle.
\]
In both cases $(1 \; 2)(3\; 4)\in G$, which acts freely on the set $\{1, 2, 3, 4\}$. 
Thus the criteria of Theorem~\ref{thm:sparsecubics} is satisfied.
\end{comment}

\medskip

The cases $n=2$ and $n=1$ are similar, but more elementary, and so we omit the details.
We observe however that the case $n=1$ is already treated in Theorem~\ref{thm:original}.
\begin{comment}
We now suppose $n=2$. After possible swapping $\varepsilon$ and $\delta$, we may suppose 
that $\varepsilon$ is a root of one of the polynomials~\eqref{eqn:minpol2}. 
To complete the proof, it suffices to show the criteria of Theorem~\ref{thm:sparsecubics} is 
satisfied, where $F$ is one of 
\begin{align*}
\begin{split}
P(T,X) & = X^3 + T X^2 - (2T + 3) X - 1;\\
Q(T,X) & = X^3 + T X^2 - (2T + 4) X + 1;\\
R(T,X) & = X^3 + T X^2 - (2T + 5) X + 1.
\end{split}
\end{align*}
In the notation of Theorem~\ref{thm:sparsecubics}, we have $\Theta=\{-2,\; -3\}$ by Lemma~\ref{lem:minpol2}. 
There are three possibilities for the discriminant of $F$ with respect to $X$ given by
\begin{align*}
\begin{split}
\Delta_{X}(P) & = (4T^2 + 24 T + 9)(T+3)^2;\\
\Delta_{X}(Q) & = 4 T^4 + 44 T^3 + 172 T^2 + 312 T + 229;\\
\Delta_{X}(R) & = 4 T^4 + 48 T^3 + 229 T^2 + 510 T + 473.
\end{split}
\end{align*}
Thus in the notation of Theorem~\ref{thm:sparsecubics}, 
either $g = 4T^2 + 24 T + 9$ or $g = \Delta_{X}(Q)$ or $g = \Delta_{X}(R)$. 
In all three cases, it is straightforward to check that $g$ is irreducible. 
Again, we conclude that $\Gal(g)$ contains an element 
acting freely on the roots of $g$ by~\cite{Jordan}. 

The $n=1$ case is similar and, moreover, is given by Theorem~\ref{thm:original}.
\end{comment}
This completes the proof of Theorem~\ref{thm:main}. We refer the reader to 
\[
	\text{\url{https://github.com/MaleehaKhawaja/Unit}} 
\]
for the supporting Magma code.
\bibliographystyle{abbrv}
\bibliography{Unit}
\end{document}